\documentclass[11pt]{article}
\usepackage[a4paper,margin=1in]{geometry}
\usepackage[T1]{fontenc}
\usepackage[utf8]{inputenc}
\usepackage{lmodern}
\usepackage{microtype}
\usepackage{amsmath,amssymb,amsthm,mathtools}
\usepackage{hyperref}
\usepackage{enumitem}
\usepackage{mathrsfs}

\hypersetup{colorlinks=true,linkcolor=blue,citecolor=blue,urlcolor=blue}

\newtheorem{theorem}{Theorem}[section]
\newtheorem{proposition}[theorem]{Proposition}

\newtheorem{remark}[theorem]{Remark}
\newtheorem{definition}[theorem]{Definition}

\newcommand{\Q}{\mathbf{Q}}
\newcommand{\Pj}{\mathbf{P}}
\newcommand{\Eul}{\mathcal{E}}

\newcommand{\keywords}[1]{\par\medskip\noindent\textbf{Keywords: }#1}

\title{Quartic Rational Diophantine Quadruples and the Euler Surface}
\author{%
Alen Andra\v{s}ek\\
{\small University of Zagreb, Faculty of Civil Engineering, Department of Mathematics}
\and
Matija Kazalicki\\
{\small University of Zagreb, Faculty of Science, Department of Mathematics}
\and
Domagoj Vlah\\
{\small University of Zagreb Faculty of Electrical Engineering and Computing, Department of Applied Mathematics}%
}
\date{}

\begin{document}
\maketitle

\begin{abstract}
We prove that there exist infinitely many quartic rational Diophantine quadruples, that is, sets of four pairwise distinct nonzero rational numbers whose pairwise products increased by \(1\) are fourth powers in \(\Q\). To the best of our knowledge, no examples of such quadruples were previously known. Our construction is motivated by computer experiments and leads naturally to the classical Euler surface
\[
\Eul:\qquad X^4+Y^4=Z^4+W^4.
\]
We show that every rational point on a suitable Zariski-open subset of \(\Eul\) yields a quartic rational Diophantine quadruple, thereby obtaining a rational map from the Euler surface to the parameter space of quartic quadruples. In particular, Euler's classical parametrization produces the first explicit infinite family of quartic rational Diophantine quadruples. We also explain that the same mechanism extends to arbitrary exponents \(k\geq 2\), with the Euler surface replaced by the Fermat--Euler surface \(\mathcal E_k:X^k+Y^k=Z^k+W^k\).

For even \(k\), every rational point on a suitable open subset of \(\mathcal E_k\) gives rise to a \(k\)th power rational Diophantine quadruple, while for odd \(k\) one obtains such quadruples on the locus where \(W/Z\) is a square.
\end{abstract}
\keywords{higher power rational Diophantine tuples, quartic Diophantine quadruples, Euler surface, Fermat-Euler surface}
\section{Introduction}

\begin{definition}
Let $k\geq 2$ and $m\geq 2$ be integers. A \emph{$k$th power rational Diophantine $m$-tuple} is a set of pairwise distinct nonzero rational numbers
\[
\{a_1,a_2,\dots,a_m\}\subset \Q
\]
with the property that for every $1\leq i<j\leq m$ the number $a_i a_j+1$ is a $k$th power in $\Q$.
\end{definition}

The case $k=2$ is the classical one: such sets are usually called \emph{rational Diophantine $m$-tuples}. They go back to Diophantus, who found the rational quadruple
\[
\left\{\frac1{16},\frac{33}{16},\frac{17}{4},\frac{105}{16}\right\},
\]
while Fermat found the integer quadruple $\{1,3,8,120\}$. In the rational setting the theory is especially rich. Euler proved that there exist infinitely many rational Diophantine quintuples, Gibbs found the first rational Diophantine sextuple, and Dujella, Kazalicki, Miki\'c and Szikszai later proved that there are infinitely many rational Diophantine sextuples \cite{DKMS}. Subsequent work produced several further infinite families of rational Diophantine sextuples, including families with additional structure \cite{DK,DKP-sq,DKP-strong}. No examples of rational Diophantine septuples are known. For a broader historical overview and further references, see Dujella's book \emph{Diophantine $m$-tuples and elliptic curves} \cite{DujellaBook}.

The higher-power variant, in which one fixes an exponent \(k>2\) and asks
that \(a_i a_j+1\) be a \(k\)-th power, was studied in the integer setting by
Bugeaud and Dujella \cite{BD}. They exhibited cubic and quartic examples of
integer triples and proved, among other results, that there are no \(k\)-th
power Diophantine quadruples in positive integers for \(k\ge 177\). Recently,
Batta, Szikszai and Tengely initiated the systematic study of higher-power
rational Diophantine tuples. They proved that, for every \(k\ge 2\), there
exist infinitely many \(k\)-th power rational Diophantine triples; in the
cubic case, they also proved the existence of infinitely many rational cubic
Diophantine quadruples \cite{BST}. In a different direction, Byeon and Fuchs
showed that every rational cube Diophantine pair can be extended to a rational
cube Diophantine triple \cite{BF}.

We also mention the recent preprint of Andra\v{s}ek \cite{Andrasek}, which
introduces regular quartic Diophantine triples, reduces their construction to
rational points on an associated elliptic surface, and obtains several infinite
parametric families, including infinitely many triples with all elements
positive. By contrast, to the best of our knowledge, no example of
a rational quartic Diophantine quadruple had previously appeared in the
literature.

The goal of the present paper is to treat the quartic case in detail and prove that quartic rational Diophantine quadruples exist in infinite families. We then show that the same construction extends naturally to arbitrary exponents.

\begin{theorem}\label{thm:main-intro}
There exist infinitely many quartic rational Diophantine quadruples.
\end{theorem}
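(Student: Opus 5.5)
The plan is to reduce Theorem~\ref{thm:main-intro} to finding rational points on the Euler surface $\Eul: X^4+Y^4=Z^4+W^4$, and then to invoke Euler's classical parametric solution of $\Eul$.

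The first step is an ansatz for the quadruple that builds in as much structure as possible, so that most of the six conditions $a_ia_j+1=\square^4$ hold identically. Computer searches for small-height quartic quadruples should suggest which structure to impose. A natural first guess is a sign-symmetric shape such as $\{a,b,-a,-b\}$, or a twisted version $\{a,b,-\lambda a,-\lambda^{-1}b\}$. For these, the pairs $(a,b)$ and $(-a,-b)$ give the same product, and so do $(a,-b)$ and $(-a,b)$ (up to the twist). The six conditions then collapse to a few equations of the form $1\pm ab=r^4,s^4$ and $1-a^2=p^4$, $1-b^2=q^4$. Eliminating $ab$ gives a relation $r^4+s^4=\text{const}$. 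Alternatively, I would write each element as $a_i=(u_i^4-1)/a_j$ in terms of a pivot element. In either case I aim for a parametrisation in which:
\begin{itemize}
\item five conditions are satisfied automatically by rational functions of $(X:Y:Z:W)$;
\item the sixth condition is equivalent to $X^4+Y^4=Z^4+W^4$.
\end{itemize}
I would tune the ansatz, for instance by introducing the twist parameter $\lambda$ and homogenising, until the remaining constraint is exactly the Euler quartic rather than a special fibre such as $r^4+s^4=2$.

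Once such a rational map $\Eul\dashrightarrow\{\text{quadruples}\}$ is written down, I will verify directly that each $a_ia_j+1$ equals the fourth power of an explicit rational function on $\Eul$. This is a polynomial identity modulo $X^4+Y^4-Z^4-W^4$ and can be checked symbolically. The degenerate locus is a proper Zariski-closed subset $D\subset\Eul$ where some $a_i$ vanishes or has a pole, or where $a_i=a_j$ for some $i\ne j$. I will define the open set $U=\Eul\setminus D$, so every point of $U(\Q)$ yields a genuine quartic rational Diophantine quadruple.

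To get infinitely many quadruples, I will substitute Euler's parametric curve $C\subset\Eul$, which is given by polynomials of degree $7$ in $(m,n)$. The curve $C$ is not contained in $D$; this can be verified by exhibiting one rational parameter value whose quadruple is nondegenerate. Then $C\cap D$ is finite, so all but finitely many rational parameters give valid quadruples. These quadruples are pairwise distinct as sets, because the map from parameters to quadruples is a nonconstant rational map and hence has finite fibres. This proves the theorem and gives an explicit infinite family.

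The main obstacle is the first step: finding an ansatz whose residual condition is precisely the Euler surface, rather than a curve of positive genus or a surface with no known rational points. I would try the sign-symmetric and twisted-symmetric shapes first, guided by the numerical examples. If the constraint comes out as $r^4+s^4=c$ with $c$ varying, I would let $c$ itself be a sum of two fourth powers $z^4+w^4$ coming from the other pair of elements, which turns the relation into $\Eul$.
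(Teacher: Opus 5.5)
\textbf{There is a genuine gap: the proposal never constructs the map, and that map is the whole content of the theorem.} Your handling of the ``infinitely many'' part is fine and matches how the paper concludes: a nondegeneracy locus on $\Eul$, Euler's degree-$7$ curve not contained in it, and finite fibres. But you never construct the rational map $\Eul\dashrightarrow\{\text{quadruples}\}$. ``Tune the ansatz until the residual constraint is the Euler quartic'' is a research plan, not a proof step, so as written you have not exhibited even one quartic quadruple.

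Moreover, the concrete ansatz you propose first cannot work. For $\{a,b,-a,-b\}$ the conditions $1+ab=r^4$ and $1-ab=s^4$ give $r^4+s^4=2$. Writing $r=x/z$, $s=y/z$, one has $(z^2)^4-(xy)^4=\bigl((x^4-y^4)/2\bigr)^2$. Fermat's theorem on $X^4-Y^4=Z^2$ then forces $r^4=s^4=1$, i.e.\ $ab=0$. The twisted shape $\{a,b,-\lambda a,-\lambda^{-1}b\}$ does not lead to $\Eul$ either. It gives $(r^4-1)^2=(1-s^4)(1-s'^4)$ together with two further fourth-power conditions $1-\lambda a^2=p^4$ and $1-\lambda^{-1}b^2=q^4$.

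\textbf{The missing idea is to exploit $0=0^4$.} Take a pair with $bd=-1$, namely $b=-\kappa^2$ and $d=\kappa^{-2}$, so that $bd+1=0$, and impose no symmetry on the other two elements. Then $ad+1=u^4$ and $cd+1=w^4$ simply define $a=\kappa^2(u^4-1)$ and $c=\kappa^2(w^4-1)$, and one computes
\[
ab+1=\kappa^4\bigl(1+\kappa^{-4}-u^4\bigr),\qquad
bc+1=\kappa^4\bigl(1+\kappa^{-4}-w^4\bigr),\qquad
ac+1=\kappa^4u^4w^4-\kappa^4\bigl(u^4+w^4-1-\kappa^{-4}\bigr).
\]
Hence the single relation $u^4+w^4=1+\kappa^{-4}$ makes these three quantities equal to $(\kappa w)^4$, $(\kappa u)^4$ and $(\kappa uw)^4$ at once. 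With $\kappa=W/Z$, $u=X/W$, $w=Y/W$, this relation is exactly $X^4+Y^4=Z^4+W^4$. You then recover the paper's map
\[
a=\frac{X^4-W^4}{Z^2W^2},\qquad b=-\frac{W^2}{Z^2},\qquad c=\frac{Y^4-W^4}{Z^2W^2},\qquad d=\frac{Z^2}{W^2},
\]
which is nondegenerate off $XYZW(X^2-Y^2)(X^2-W^2)(Y^2-W^2)=0$.

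Your closing heuristic, making the constant in $r^4+s^4=\mathrm{const}$ a sum of two fourth powers, points the right way, since here the constant is $1^4+\kappa^{-4}$. What produces that constant, however, is the degenerate pair $bd=-1$ with $b$ equal to minus a square. Note also that the resulting structure is not ``five conditions automatic, one equivalent to $\Eul$''. Three conditions ($ad+1$, $cd+1$, $bd+1$) are automatic, and the other three are settled simultaneously by one Euler relation.
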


Our construction is governed by the Euler surface
\[
\Eul:\qquad X^4+Y^4=Z^4+W^4.
\]
The point is that a rational point on $\Eul$ canonically produces a quartic Diophantine quadruple of a special type.

\begin{proposition}\label{prop:euler-to-quadruple}
Let $(X:Y:Z:W)\in \Eul(\Q)$ with $ZW\neq 0$, and define
\begin{equation}\label{eq:euler-map}
 a=\frac{X^4-W^4}{Z^2W^2},\qquad
 b=-\frac{W^2}{Z^2},\qquad
 c=\frac{Y^4-W^4}{Z^2W^2},\qquad
 d=\frac{Z^2}{W^2}.
\end{equation}
Then
\[
ab+1=\left(\frac{Y}{Z}\right)^4,
\qquad
ac+1=\left(\frac{XY}{ZW}\right)^4,
\qquad
ad+1=\left(\frac{X}{W}\right)^4,
\]
\[
bc+1=\left(\frac{X}{Z}\right)^4,
\qquad
bd+1=0,
\qquad
cd+1=\left(\frac{Y}{W}\right)^4.
\]
Moreover, the four numbers in \eqref{eq:euler-map} are pairwise distinct and nonzero if and only if
\begin{equation}\label{eq:nondeg-euler}
XYZW\,(X^2-Y^2)(X^2-W^2)(Y^2-W^2)\neq 0.
\end{equation}
In that case they form a quartic rational Diophantine quadruple.
\end{proposition}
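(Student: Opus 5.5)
The plan is a direct verification: substitute the definitions \eqref{eq:euler-map} into each of the six expressions, simplify using the single relation $X^4+Y^4=Z^4+W^4$, and then work through the $\binom{4}{2}$ possible coincidences together with the vanishing of each of $a,b,c,d$. No earlier result is needed beyond the defining equation of $\Eul$ and $ZW\neq 0$, which makes every expression in \eqref{eq:euler-map} well defined.

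\textbf{Step 1: the five identities involving $b$ or $d$.} These follow from a one-line computation each. For instance,
\[
ab+1=\frac{Z^4+W^4-X^4}{Z^4}=\frac{Y^4}{Z^4},
\qquad
ad+1=\frac{X^4-W^4+W^4}{W^4}=\frac{X^4}{W^4}.
\]
In the same way, $bc+1=(Z^4+W^4-Y^4)/Z^4=X^4/Z^4$ and $cd+1=Y^4/W^4$. Finally $bd=-1$ holds trivially, and $0=0^4$ counts as a fourth power.

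\textbf{Step 2: the product $ac+1$.} This is the only identity needing more than one substitution. I would write
\[
(X^4-W^4)(Y^4-W^4)+Z^4W^4=X^4Y^4-W^4(X^4+Y^4)+W^8+Z^4W^4 .
\]
Replacing $X^4+Y^4$ by $Z^4+W^4$ cancels everything except $X^4Y^4$. Hence $ac+1=(XY/(ZW))^4$.

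\textbf{Step 3: nondegeneracy.} Since $ZW\neq0$, both $b$ and $d$ are nonzero, and $b<0<d$ gives $b\neq d$. The remaining conditions translate as follows.
\begin{itemize}
\item $a=0$ iff $X^4=W^4$, and $c=0$ iff $Y^4=W^4$.
\item $a=c$ iff $X^4=Y^4$.
\item $a=b$ iff $X^4=0$, and $c=b$ iff $Y^4=0$.
\item $a=d$ iff $X^4=Z^4+W^4$, which by the Euler relation is $Y=0$. Symmetrically, $c=d$ iff $X=0$.
\end{itemize}
Over $\Q$, $U^4=V^4$ is equivalent to $U^2=V^2$, because $U^2+V^2=0$ forces $U=V=0$, which already gives $U^2=V^2$. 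So the degenerate locus is exactly the vanishing of $XY(X^2-Y^2)(X^2-W^2)(Y^2-W^2)$. Together with $ZW\neq0$ this yields \eqref{eq:nondeg-euler} in both directions.

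There is no real obstacle here. The only points requiring care are the cancellation in Step 2 and remembering to invoke the surface equation when treating the cases $a=d$ and $c=d$.
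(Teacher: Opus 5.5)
Your proof is correct and follows the paper's proof essentially line by line in the nondegeneracy part: you use the same differences $a-b$, $c-d$, $a-d$, $c-b$, $a-c$, and you get $b\neq d$ from the signs where the paper uses $bd=-1$. For the six identities, the paper specializes its $(r,s,t,u,v,w)$ reduction (Propositions~\ref{prop:quartic-reduction}--\ref{prop:euler-specialization}) and then calls them immediate, whereas you verify them directly from $X^4+Y^4=Z^4+W^4$. Your version is self-contained, and it writes out both the cancellation for $ac+1$ and the equivalence $U^4=V^4\iff U^2=V^2$ over $\Q$, which the paper leaves implicit.
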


Thus the quartic problem is linked to the arithmetic of the Euler surface. Euler's classical construction gives a rational curve on the Euler surface $\Eul$:
\begin{align*}
X&=\alpha^7+\alpha^5\beta^2-2\alpha^3\beta^4+3\alpha^2\beta^5+\alpha\beta^6,\\
Y&=\alpha^6\beta-3\alpha^5\beta^2-2\alpha^4\beta^3+\alpha^2\beta^5+\beta^7,\\
Z&=\alpha^7+\alpha^5\beta^2-2\alpha^3\beta^4-3\alpha^2\beta^5+\alpha\beta^6,\\
W&=\alpha^6\beta+3\alpha^5\beta^2-2\alpha^4\beta^3+\alpha^2\beta^5+\beta^7.
\end{align*}
and for this parametrization one has $X^4+Y^4=Z^4+W^4$. Composing this parametrization with Proposition~\ref{prop:euler-to-quadruple} yields a family of quartic rational Diophantine quadruples.

If we specialize Euler's parametrization by $(\alpha,\beta)=(1,t)$ then we obtain the rational point
\[
(X:Y:Z:W)=\bigl(
t^6+3t^5-2t^4+t^2+1:
t(t^6+t^4-2t^2-3t+1):
t^6-3t^5-2t^4+t^2+1:
t(t^6+t^4-2t^2+3t+1)
\bigr)
\]
on \(\Eul\). Hence, for all \(t\in\Q\) outside a finite exceptional set,
Proposition~\ref{prop:euler-to-quadruple} gives the quartic rational
Diophantine quadruple
\[
\begin{aligned}
a(t)&=
-\frac{(t-1)(t+1)(t^2+1)(t^4+1)(t^2-t-1)(t^2+t-1)(t^4-t^2+1)}%
{t^2\,(t^6+t^4-2t^2+3t+1)^2\,(t^6-3t^5-2t^4+t^2+1)^2}\\
&\qquad\times
(t^6+4t^4-6t^3+4t^2+1)(t^6+4t^4+6t^3+4t^2+1),\\[1ex]
b(t)&=
-\frac{t^2\,(t^6+t^4-2t^2+3t+1)^2}{(t^6-3t^5-2t^4+t^2+1)^2},\\[1ex]
c(t)&=
-\frac{24t^3(t^2+1)(t^4+3t^2+1)(t^6-2t^4+t^2+1)(t^6+t^4-2t^2+1)}%
{(t^6+t^4-2t^2+3t+1)^2\,(t^6-3t^5-2t^4+t^2+1)^2},\\[1ex]
d(t)&=
\frac{(t^6-3t^5-2t^4+t^2+1)^2}{t^2\,(t^6+t^4-2t^2+3t+1)^2}.
\end{aligned}
\]

Theorem~\ref{thm:main-intro} follows immediately from Proposition~\ref{prop:euler-to-quadruple}, since Euler's parametrization supplies infinitely many rational points on $\Eul$ satisfying the nondegeneracy condition \eqref{eq:nondeg-euler}. In particular, our construction places quartic rational Diophantine quadruples inside the geometry of a classical K3 surface.

Although the main focus of the paper is the quartic case, the construction we obtain is not peculiar to exponent $4$. Once one isolates the special locus $v=0$ and $ru=tw$, the quartic compatibility conditions are governed by the Euler surface, and the same argument with exponent $4$ replaced by an arbitrary integer $k\geq 2$ leads to the Fermat--Euler surface
\[
\mathcal E_k:\qquad X^k+Y^k=Z^k+W^k.
\]
In Theorem \ref{thm:general-euler-to-quadruple} we formulate this general construction explicitly. For even $k$, rational points on a suitable open subset of $\mathcal E_k$ produce $k$th power rational Diophantine quadruples, while for odd $k$ the same holds under the additional condition that $W/Z$ be a square. In particular, the cubic family of Batta--Szikszai--Tengely appears as a special case of our construction.

\section{Experiments and the Euler surface}\label{sec:euler-surface}

Let $\{a,b,c,d\}$ be a quartic rational Diophantine quadruple. After choosing fourth roots, we may write
\begin{equation}\label{eq:rstuvw-def}
\begin{alignedat}{2}
ab+1&=r^4,  &\qquad ad+1&=u^4, \\
ac+1&=s^4,  &\qquad bd+1&=v^4, \\
bc+1&=t^4,  &\qquad cd+1&=w^4.
\end{alignedat}
\end{equation}

Then
\begin{equation}\label{eq:abcd-from-rstuvw}
a^2=\frac{(r^4-1)(u^4-1)}{v^4-1},\qquad
b=\frac{r^4-1}{a},\qquad
c=\frac{s^4-1}{a},\qquad
d=\frac{u^4-1}{a},
\end{equation}
and the remaining pairwise conditions are equivalent to
\begin{equation}\label{eq:compatibility-second}
(r^4-1)(w^4-1)=(t^4-1)(u^4-1),
\end{equation}
\begin{equation}\label{eq:compatibility-third}
(r^4-1)(w^4-1)=(s^4-1)(v^4-1).
\end{equation}
Conversely, any $r,s,t,u,v,w\in\Q$ satisfying \eqref{eq:compatibility-second}, \eqref{eq:compatibility-third}, together with the square condition in \eqref{eq:abcd-from-rstuvw}, determine a quartic rational Diophantine quadruple through \eqref{eq:abcd-from-rstuvw}.

These identities guided our first computations. Starting from a pair $(a,b)$ with
\[
b=\frac{r^4-1}{a},
\]
we searched for rational numbers $x$ such that $ax+1$ is a fourth power and $bx+1$ is a square, hoping that in favourable cases this square would itself be a fourth power for two values $x=c,d$.

More precisely, the pair $(a,b)$ was not sampled uniformly from all rational pairs. Fix a height bound $B$. The search first enumerates positive reduced rationals $a=p/q$ deterministically by increasing height
\[
h(a)=\max\{|p|,q\},
\]
namely $1$ at height $1$, and for each $h\ge 2$ the fractions
\[
\frac1h,\frac2h,\ldots,\frac{h-1}{h},\frac{h}{h-1},\ldots,\frac{h}{1},
\]
with reducible fractions omitted. For each such $a=p/q$, it then enumerates coprime positive integers $u,v$ with $u\neq v$ satisfying
\[
|p|v^4\le B,\qquad |u^4-v^4|\le \left\lfloor \frac{B}{q}\right\rfloor,
\]
and sets
\[
b=\frac{q(u^4-v^4)}{pv^4}=\frac{(u/v)^4-1}{a}.
\]
Thus $ab+1=(u/v)^4$ is a fourth power by construction. A generated pair is retained only if $b\neq 0$, $a\neq b$, and
\[
\max\{h(a),h(b)\}\le B;
\]
moreover, the canonical orientation
\[
h(a)<h(b)\qquad\text{or}\qquad \bigl(h(a)=h(b)\ \text{and}\ a\le b\bigr)
\]
is imposed so that the two orderings of the same pair are not counted separately. For each retained pair, the subsequent search for $x$ is reduced to a brute-force search for rational points on
\[
(av)^2=ab\,u^4+a(a-b),
\]
which is equivalent to the system $ax+1=u^4$, $bx+1=v^2$, with $x=(u^4-1)/a$.

In the computation reported here we took $B=100\,000$, so in particular both $h(a)$ and $h(b)$ were at most $100\,000$. To parallelize the deterministic enumeration, the index range for the parameter $a$ was partitioned into contiguous intervals, each contributing at most a prescribed number of valid pairs, and these intervals were processed separately. Altogether, this search examined $219{,}415{,}142$ pairs $(a,b)$ generated in this way. It yielded $569$ unique triples, among which $11$ almost quadruples. The cumulative used CPU time was $11{,}785$ hours, corresponding to an average of about $193$ ms per pair.

Two particularly suggestive almost quadruples examples were
\[
(a,b,c,d)=\left(\frac{64}{9},-\frac{2295}{16384},\frac{145}{36},-\frac{317135}{2359296}\right),
\]
for which
\[
(r,s,t,u,w)=\left(\frac14,\frac73,\frac{13}{16},\frac{11}{24},\frac{79}{96}\right),
\]
and
\[
(a,b,c,d)=\left(384,-\frac{85}{32768},\frac{435}{2},-\frac{317135}{127401984}\right),
\]
for which
\[
(r,s,t,u,w)=\left(\frac14,17,\frac{13}{16},\frac{11}{24},\frac{79}{96}\right).
\]
In particular, these examples shared the same values of $(r,t,u,w)$ and differed only in $s$. This made it natural to fix $(r,t,u,w)$ satisfying \eqref{eq:compatibility-second} and to study the remaining conditions.

For fixed $(r,t,u,w)$ satisfying \eqref{eq:compatibility-second}, the square condition in \eqref{eq:abcd-from-rstuvw} becomes the genus one curve
\begin{equation}\label{eq:genus-one-curve}
\mathcal{C}_{r,u}:\qquad y^2=(r^4-1)(u^4-1)(x^4-1),
\end{equation}
with $x=v$. We therefore varied $(r,t,u,w)$ satisfying \eqref{eq:compatibility-second}, computed rational points on \eqref{eq:genus-one-curve}, and tested whether they also satisfy \eqref{eq:compatibility-third}. This produced several genuine quartic quadruples, including
\[
(a,b,c,d)=\left(\frac{310300575}{317623684},-\frac{17689}{17956},-\frac{75195840}{79405921},\frac{17956}{17689}\right),
\]
with
\[
(r,s,t,u,v,w)=\left(\frac{59}{134},\frac{4661}{8911},\frac{79}{67},\frac{158}{133},0,\frac{59}{133}\right),
\]
and
\[
(a,b,c,d)=\left(\frac{310300575}{86899684},-\frac{3481}{24964},\frac{75195840}{21724921},\frac{24964}{3481}\right),
\]
with
\[
(r,s,t,u,v,w)=\left(\frac{133}{158},\frac{8911}{4661},\frac{67}{79},\frac{134}{59},0,\frac{133}{59}\right).
\]
The striking feature of these examples is that they satisfy
\[
v=0,\qquad ru=tw.
\]
This suggested restricting attention to quartic quadruples with precisely this additional structure.

It is convenient to rewrite the relation $ru=tw$ in the form
\begin{equation}\label{eq:def-k}
\frac{r}{w}=\frac{t}{u}=\kappa,
\end{equation}
so that
\begin{equation}\label{eq:r-t-from-k}
r=\kappa w,\qquad t=\kappa u.
\end{equation}
We then set
\begin{equation}\label{eq:def-s-special}
s=\kappa uw.
\end{equation}
The point of the specialisation is that, away from the diagonal locus $u^4=w^4$, both compatibility conditions collapse to a single quartic relation, while the square condition becomes automatic.

\begin{proposition}\label{prop:quartic-reduction}
Let $\kappa,u,w\in\Q^\times$ and define $r,t,s$ by \eqref{eq:r-t-from-k}, \eqref{eq:def-s-special}, and $v=0$. Then the third compatibility condition \eqref{eq:compatibility-third} is equivalent to
\begin{equation}\label{eq:special-quartic}
u^4+w^4=1+\frac{1}{\kappa^4}.
\end{equation}
Moreover, the second compatibility condition \eqref{eq:compatibility-second} becomes
\begin{equation}\label{eq:diagonal-or-quartic}
(w^4-u^4)\bigl(\kappa^4(u^4+w^4)-(\kappa^4+1)\bigr)=0.
\end{equation}
In particular, away from the diagonal locus $u^4=w^4$, the second and third compatibility conditions are both equivalent to \eqref{eq:special-quartic}.
\end{proposition}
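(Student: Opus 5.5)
This is a direct substitution into \eqref{eq:compatibility-second} and \eqref{eq:compatibility-third}, followed by polynomial factorisation. No earlier result is needed beyond the definitions \eqref{eq:r-t-from-k}, \eqref{eq:def-s-special} and $v=0$.

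\emph{Third condition.} With $v=0$ we have $v^4-1=-1$, so \eqref{eq:compatibility-third} reads
\[
(r^4-1)(w^4-1)=1-s^4 .
\]
Substitute $r=\kappa w$ and $s=\kappa uw$. The left side is $(\kappa^4w^4-1)(w^4-1)=\kappa^4w^8-\kappa^4w^4-w^4+1$. The right side is $1-\kappa^4u^4w^4$. Cancel the constant $1$ and divide by $w^4\neq 0$; this division is legitimate since $w\in\Q^\times$, so the step is an equivalence. We obtain
\[
\kappa^4w^4-\kappa^4-1=-\kappa^4u^4 .
\]
Dividing by $\kappa^4\neq 0$ gives exactly \eqref{eq:special-quartic}.

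\emph{Second condition.} Substitute $r=\kappa w$ and $t=\kappa u$ into \eqref{eq:compatibility-second}:
\[
(\kappa^4w^4-1)(w^4-1)=(\kappa^4u^4-1)(u^4-1).
\]
Expand both sides and subtract. The constant terms cancel, and the difference is
\[
\kappa^4(w^8-u^8)-(\kappa^4+1)(w^4-u^4).
\]
Factor out $w^4-u^4$, using $w^8-u^8=(w^4-u^4)(w^4+u^4)$. This yields
\[
(w^4-u^4)\bigl(\kappa^4(u^4+w^4)-(\kappa^4+1)\bigr),
\]
which is \eqref{eq:diagonal-or-quartic}.

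\emph{Conclusion.} If $u^4\neq w^4$, the first factor in \eqref{eq:diagonal-or-quartic} is nonzero. The second condition is then equivalent to $\kappa^4(u^4+w^4)=\kappa^4+1$, which is again \eqref{eq:special-quartic} after dividing by $\kappa^4$. So both compatibility conditions reduce to the same relation.

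The only delicate point is keeping every step an equivalence, i.e. dividing only by the nonzero quantities $w^4$ and $\kappa^4$. This is guaranteed by the hypothesis $\kappa,u,w\in\Q^\times$.
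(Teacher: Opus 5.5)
Your proof is correct and follows the paper's argument. You substitute $v=0$, $r=\kappa w$, $s=\kappa uw$, $t=\kappa u$, divide by $w^4$ (the paper factors it out instead, which amounts to the same thing) and by $\kappa^4$, and factor the difference for the second condition as $(w^4-u^4)\bigl(\kappa^4(u^4+w^4)-(\kappa^4+1)\bigr)$.
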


\begin{proof}
Since $v=0$, equation \eqref{eq:compatibility-third} becomes
\[
(r^4-1)(w^4-1)=1-s^4.
\]
Substituting $r=\kappa w$ and $s=\kappa uw$, we obtain
\[
(\kappa^4w^4-1)(w^4-1)=1-\kappa^4u^4w^4.
\]
Expanding and simplifying gives
\[
w^4\bigl(\kappa^4(u^4+w^4)-(\kappa^4+1)\bigr)=0.
\]
Since $w\neq 0$, this is equivalent to \eqref{eq:special-quartic}.

For the second compatibility condition we substitute $r=\kappa w$ and $t=\kappa u$ into \eqref{eq:compatibility-second} and get
\[
(\kappa^4w^4-1)(w^4-1)=(\kappa^4u^4-1)(u^4-1).
\]
After expansion the left-hand side minus the right-hand side factors as
\[
(w^4-u^4)\bigl(\kappa^4(u^4+w^4)-(\kappa^4+1)\bigr),
\]
which is exactly \eqref{eq:diagonal-or-quartic}.
\end{proof}

\begin{proposition}\label{prop:auto-square-special}
Let $\kappa,u,w\in\Q^\times$ and define $r,t,s,v$ by \eqref{eq:r-t-from-k}, \eqref{eq:def-s-special}, and $v=0$. If \eqref{eq:special-quartic} holds, then the square condition in \eqref{eq:abcd-from-rstuvw} is automatic. More precisely,
\[
(r^4-1)(u^4-1)(v^4-1)=\left(\frac{r^4-1}{\kappa^2}\right)^2.
\]
\end{proposition}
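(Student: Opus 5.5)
The plan is a direct substitution: use $v=0$ to simplify the factor $v^4-1$, and then let \eqref{eq:special-quartic} turn the factor $u^4-1$ into a multiple of $r^4-1$. First, since $v=0$ we have $v^4-1=-1$, so
\[
(r^4-1)(u^4-1)(v^4-1)=-(r^4-1)(u^4-1).
\]
Because $v^4-1=-1$, this product equals the right-hand side of the formula for $a^2$ in \eqref{eq:abcd-from-rstuvw}. So it suffices to show it is the square of the rational number $(r^4-1)/\kappa^2$.

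Next I rewrite \eqref{eq:special-quartic} as $u^4-1=\kappa^{-4}-w^4$. Multiplying by $-\kappa^4$ gives
\[
-\kappa^4(u^4-1)=\kappa^4w^4-1=r^4-1,
\]
where the last equality uses $r=\kappa w$ from \eqref{eq:r-t-from-k}. Hence $-(u^4-1)=(r^4-1)/\kappa^4$. Substituting this into the display above gives
\[
-(r^4-1)(u^4-1)=\frac{(r^4-1)^2}{\kappa^4}=\left(\frac{r^4-1}{\kappa^2}\right)^2,
\]
which is the claimed identity. Consequently we may take $a=\pm(r^4-1)/\kappa^2\in\Q$, so the square condition holds automatically.

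There is no real obstacle; the only care needed is the sign coming from $v^4-1=-1$. That sign is exactly what makes $-(u^4-1)$, rather than $u^4-1$, the right quantity to compare with $(r^4-1)/\kappa^4$. The hypothesis $\kappa\neq0$ is used only to divide by $\kappa^2$.
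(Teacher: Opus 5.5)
Your proposal is correct and follows the paper's proof essentially verbatim: use $v^4-1=-1$, rewrite \eqref{eq:special-quartic} together with $r=\kappa w$ to get $u^4-1=-(r^4-1)/\kappa^4$, and substitute to obtain $(r^4-1)^2/\kappa^4$. Your extra remark that this product coincides with the expression for $a^2$ in \eqref{eq:abcd-from-rstuvw} (since $v^4-1=-1$) is a correct and useful clarification of why the identity settles the square condition.
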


\begin{proof}
Since $v=0$, the left-hand side is
\[
-(r^4-1)(u^4-1).
\]
Using $r=\kappa w$ and \eqref{eq:special-quartic}, we compute
\[
u^4-1=\frac{1}{\kappa^4}-w^4=-\frac{\kappa^4w^4-1}{\kappa^4}=-\frac{r^4-1}{\kappa^4}.
\]
Therefore
\[
(r^4-1)(u^4-1)(v^4-1)=-(r^4-1)(u^4-1)=\frac{(r^4-1)^2}{\kappa^4},
\]
as claimed.
\end{proof}

Thus the special locus
\[
v=0,\qquad r=\kappa w,\qquad t=\kappa u,\qquad s=\kappa uw,
\]
reduces the quartic quadruple problem to the single equation \eqref{eq:special-quartic}. After homogenization this is Euler's surface.

\begin{proposition}\label{prop:euler-specialization}
Let $(X:Y:Z:W)\in\Pj^3(\Q)$ with $ZW\neq 0$, and define
\begin{equation}\label{eq:k-u-w-from-euler}
\kappa=\frac{W}{Z},\qquad u=\frac{X}{W},\qquad w=\frac{Y}{W}.
\end{equation}
Then $(X:Y:Z:W)$ lies on the Euler surface
\[
\Eul:\qquad X^4+Y^4=Z^4+W^4
\]
if and only if $k,u,w$ satisfy \eqref{eq:special-quartic}. Equivalently, if we set
\[
r=\frac{Y}{Z},\qquad s=\frac{XY}{ZW},\qquad t=\frac{X}{Z},\qquad v=0,
\]
then
\[
r=\kappa w,\qquad s=\kappa uw,\qquad t=\kappa u,
\]
and the compatibility conditions hold.
\end{proposition}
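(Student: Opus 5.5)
The plan is a direct substitution, followed by an appeal to Proposition~\ref{prop:quartic-reduction}. Note that the statement writes ``$k$'' where it means $\kappa$.

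First I will check the equivalence with the Euler surface. With $\kappa=W/Z$, $u=X/W$, $w=Y/W$ and $ZW\neq 0$, equation \eqref{eq:special-quartic} reads
\[
\frac{X^4}{W^4}+\frac{Y^4}{W^4}=1+\frac{Z^4}{W^4}.
\]
Multiplying through by $W^4\neq 0$, this is equivalent to $X^4+Y^4=W^4+Z^4$, which is the defining equation of $\Eul$. Every step is reversible because $W\neq 0$.

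Next I will verify the three identities by direct computation:
\[
\kappa w=\frac{W}{Z}\cdot\frac{Y}{W}=\frac{Y}{Z}=r,\qquad
\kappa u=\frac{W}{Z}\cdot\frac{X}{W}=\frac{X}{Z}=t,\qquad
\kappa uw=\frac{W}{Z}\cdot\frac{X}{W}\cdot\frac{Y}{W}=\frac{XY}{ZW}=s.
\]
So the tuple $(r,s,t,u,v,w)$ has exactly the special shape \eqref{eq:r-t-from-k}, \eqref{eq:def-s-special}, with $v=0$.

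For the compatibility conditions, the hypotheses of Proposition~\ref{prop:quartic-reduction} require $\kappa,u,w\in\Q^\times$. We have $\kappa\neq 0$ because $W\neq 0$. The point needing care is that $u$ or $w$ may vanish when $X=0$ or $Y=0$, so I will not rely on those hypotheses. Instead I will use the algebraic identities from the proof of Proposition~\ref{prop:quartic-reduction}. There, \eqref{eq:compatibility-third} minus its right side equals
\[
w^4\bigl(\kappa^4(u^4+w^4)-(\kappa^4+1)\bigr),
\]
and \eqref{eq:compatibility-second} minus its right side equals
\[
(w^4-u^4)\bigl(\kappa^4(u^4+w^4)-(\kappa^4+1)\bigr).
\]
Both are polynomial identities, valid for all $u,w$ and not only for nonzero ones. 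On $\Eul$ the common factor $\kappa^4(u^4+w^4)-(\kappa^4+1)$ vanishes by the first step. Hence both compatibility conditions hold, with no need to exclude the diagonal $u^4=w^4$ in this direction.

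No step presents a real obstacle. The only subtlety is the one just handled: invoking the polynomial identities rather than the nonvanishing hypotheses when $XY=0$. This also ensures the ``if and only if'' is stated under exactly the assumption $ZW\neq 0$.
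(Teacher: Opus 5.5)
Your proposal is correct and follows the paper's proof. You use the same substitution ($X=uW$, $Y=wW$, $Z=W/\kappa$, cleared by $W^4\neq 0$) and then check $r=\kappa w$, $s=\kappa uw$, $t=\kappa u$ directly. Using the factorizations from the proof of Proposition~\ref{prop:quartic-reduction} as polynomial identities is a real gain in rigor. The paper calls the compatibility conditions ``immediate from the definitions'', yet Proposition~\ref{prop:quartic-reduction} assumes $u,w\in\Q^\times$, and that hypothesis fails when $XY=0$.
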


\begin{proof}
From \eqref{eq:k-u-w-from-euler} we have
\[
X=uW,\qquad Y=wW,\qquad Z=\frac{W}{\kappa}.
\]
Therefore
\[
X^4+Y^4=Z^4+W^4
\iff
u^4W^4+w^4W^4=\frac{W^4}{\kappa^4}+W^4,
\]
and, since $W\neq 0$, this is equivalent to
\[
\kappa^4(u^4+w^4)=1+\kappa^4,
\]
which is exactly \eqref{eq:special-quartic}. The remaining assertions are immediate from the definitions.
\end{proof}

We can now prove Proposition~\ref{prop:euler-to-quadruple} from the introduction.

\begin{proof}[Proof of Proposition~\ref{prop:euler-to-quadruple}]
Let $(X:Y:Z:W)\in\Eul(\Q)$ with $ZW\neq 0$, and define $\kappa,u,w,r,s,t,v$ as in Proposition~\ref{prop:euler-specialization}. Then $v=0$, $r=\kappa w$, $s=\kappa uw$, $t=\kappa u$, and \eqref{eq:special-quartic} holds. By Proposition~\ref{prop:auto-square-special}, the square condition in \eqref{eq:abcd-from-rstuvw} is automatic, and therefore the formulas \eqref{eq:abcd-from-rstuvw} produce a quartic quadruple.

Substituting
\[
r=\frac{Y}{Z},\qquad s=\frac{XY}{ZW},\qquad u=\frac{X}{W},\qquad v=0
\]
into \eqref{eq:abcd-from-rstuvw}, and choosing the sign of $a$ so that
\[
a=\frac{X^4-W^4}{Z^2W^2},
\]
we obtain exactly the formulas \eqref{eq:euler-map}. The displayed identities for $ab+1$, $ac+1$, $ad+1$, $bc+1$, $bd+1$, and $cd+1$ are then immediate.

It remains to determine when $a,b,c,d$ are pairwise distinct and nonzero. Since $ZW\neq 0$, the numbers $b$ and $d$ are automatically nonzero. Also,
\[
a=0 \iff X^4=W^4 \iff X^2=W^2,
\qquad
c=0 \iff Y^4=W^4 \iff Y^2=W^2.
\]
Further,
\[
a-b=\frac{X^4}{Z^2W^2},
\qquad
c-d=-\frac{X^4}{Z^2W^2},
\]
so $a=b$ and $c=d$ are both equivalent to $X=0$. Likewise,
\[
a-d=-\frac{Y^4}{Z^2W^2},
\qquad
c-b=\frac{Y^4}{Z^2W^2},
\]
using $X^4+Y^4=Z^4+W^4$, so $a=d$ and $c=b$ are both equivalent to $Y=0$. Finally,
\[
a-c=\frac{X^4-Y^4}{Z^2W^2},
\]
so $a=c$ is equivalent to $X^2=Y^2$. Since $bd+1=0$, we also have $bd=-1$, and therefore $b\neq d$. This proves that $a,b,c,d$ are pairwise distinct and nonzero if and only if
\[
XYZW(X^2-Y^2)(X^2-W^2)(Y^2-W^2)\neq 0,
\]
which is exactly the condition stated in Proposition~\ref{prop:euler-to-quadruple}.
\end{proof}

The Euler surface $\Eul$ is itself a classical object in algebraic geometry and arithmetic. As a smooth quartic surface in $\Pj^3$, it is a K3 surface. Over $\Q(\zeta_8)$ it becomes projectively equivalent to the Fermat quartic
\[
X^4+Y^4+Z^4+W^4=0,
\]
so one may import a great deal of geometry from that model. In particular, over $\overline{\Q}$ it has Picard number $20$, hence is a singular K3 surface, and it contains $48$ lines; see, for example, \cite{ShiodaFermat,SchuettShiodaVanLuijk}.

For the purposes of the present paper, the most relevant feature of \(\Eul\) is the existence of rational curves on the surface. Euler's classical parametrization already produces one such curve. Moreover, Swinnerton-Dyer stated a theorem describing the curves of arithmetic genus \(0\) on \(\Eul\), and Bremner has recently made this description explicit and effective; in particular, he determined all nontrivial parametrizations of degree below any prescribed bound, showing that there are \(86\) distinct nontrivial parametrizations of degree \(<50\) \cite{SwinnertonDyerAGNT,Bremner}. This indicates that the Euler surface provides a rich source of families of quartic Diophantine quadruples.
\section{Higher exponents and the Fermat--Euler surface}\label{sec:higher-exponents}

The argument of Section~\ref{sec:euler-surface} is not specific to the quartic case. 
If one replaces the exponent $4$ everywhere by an arbitrary integer $k\ge 2$, then the same construction leads naturally to the degree-$k$ Fermat--Euler surface
\[
\mathcal E_k:\qquad X^k+Y^k=Z^k+W^k.
\]
The only essential difference between the even and odd cases is that in odd degree one must impose an additional square condition on the ratio $W/Z$.

Let $k\ge 2$. By analogy with \eqref{eq:rstuvw-def}, we write
\begin{equation}\label{eq:rstuvw-def-general}
\begin{alignedat}{2}
ab+1&=r^k,  &\qquad ad+1&=u^k, \\
ac+1&=s^k,  &\qquad bd+1&=v^k, \\
bc+1&=t^k,  &\qquad cd+1&=w^k.
\end{alignedat}
\end{equation}
Then the same calculation as before gives
\begin{equation}\label{eq:abcd-from-rstuvw-general}
a^2=\frac{(r^k-1)(u^k-1)}{v^k-1},\qquad
b=\frac{r^k-1}{a},\qquad
c=\frac{s^k-1}{a},\qquad
d=\frac{u^k-1}{a},
\end{equation}
together with the compatibility relations
\begin{equation}\label{eq:compatibility-second-general}
(r^k-1)(w^k-1)=(t^k-1)(u^k-1),
\end{equation}
\begin{equation}\label{eq:compatibility-third-general}
(r^k-1)(w^k-1)=(s^k-1)(v^k-1),
\end{equation}
and the square condition
\begin{equation}\label{eq:square-general}
(r^k-1)(u^k-1)(v^k-1)\in (\Q^\times)^2.
\end{equation}

As in the quartic case, we now impose
\[
v=0,\qquad ru=tw.
\]
Writing
\[
\kappa=\frac{r}{w}=\frac{t}{u},
\]
we have
\[
r=\kappa w,\qquad t=\kappa u.
\]
If we further set
\[
s=\kappa uw,
\]
then the same computation as in Proposition~\ref{prop:quartic-reduction} shows that, away from the diagonal locus $u^k=w^k$, the two compatibility conditions \eqref{eq:compatibility-second-general} and \eqref{eq:compatibility-third-general} are both equivalent to
\begin{equation}\label{eq:general-special-ferm}
u^k+w^k=1+\frac{1}{\kappa^k}.
\end{equation}
Moreover, the square condition \eqref{eq:square-general} becomes
\begin{equation}\label{eq:general-square-special}
(r^k-1)(u^k-1)(v^k-1)=\frac{(r^k-1)^2}{\kappa^k}.
\end{equation}
Thus, if $k$ is even, the square condition is automatic, while if $k$ is odd it is automatic provided $\kappa$ is a square in $\Q$.

Equation \eqref{eq:general-special-ferm} is the affine form of the Fermat--Euler surface
\[
\mathcal E_k:\qquad X^k+Y^k=Z^k+W^k.
\]
Indeed, if
\[
\kappa=\frac{W}{Z},\qquad u=\frac{X}{W},\qquad w=\frac{Y}{W},
\]
then \eqref{eq:general-special-ferm} is equivalent to
\[
X^k+Y^k=Z^k+W^k.
\]

We summarize the outcome in the following theorem.

\begin{theorem}\label{thm:general-euler-to-quadruple}
Let $k\ge 2$ and let
\[
\mathcal E_k:\qquad X^k+Y^k=Z^k+W^k.
\]

\smallskip

\noindent
\textup{(i) The even case.}
Assume that $k=2m$ is even, and let $(X:Y:Z:W)\in \mathcal E_k(\Q)$ with $ZW\neq 0$. Define
\begin{equation}\label{eq:general-euler-map-even}
a=\frac{X^k-W^k}{Z^mW^m},\qquad
b=-\Bigl(\frac{W}{Z}\Bigr)^m,\qquad
c=\frac{Y^k-W^k}{Z^mW^m},\qquad
d=\Bigl(\frac{Z}{W}\Bigr)^m.
\end{equation}
Then
\[
ab+1=\Bigl(\frac{Y}{Z}\Bigr)^k,\qquad
ac+1=\Bigl(\frac{XY}{ZW}\Bigr)^k,\qquad
ad+1=\Bigl(\frac{X}{W}\Bigr)^k,
\]
\[
bc+1=\Bigl(\frac{X}{Z}\Bigr)^k,\qquad
bd+1=0,\qquad
cd+1=\Bigl(\frac{Y}{W}\Bigr)^k.
\]

\smallskip

\noindent
\textup{(ii) The odd case.}
Assume that $k$ is odd, and let $(X:Y:Z:W)\in \mathcal E_k(\Q)$ with $ZW\neq 0$. Suppose in addition that
\[
\frac{W}{Z}=\lambda^2
\]
for some $\lambda\in \Q^\times$. Define
\begin{equation}\label{eq:general-euler-map-odd}
a=\frac{X^k-W^k}{\lambda^k Z^k},\qquad
b=-\lambda^k,\qquad
c=\frac{Y^k-W^k}{\lambda^k Z^k},\qquad
d=\lambda^{-k}.
\end{equation}
Then
\[
ab+1=\Bigl(\frac{Y}{Z}\Bigr)^k,\qquad
ac+1=\Bigl(\frac{XY}{ZW}\Bigr)^k,\qquad
ad+1=\Bigl(\frac{X}{W}\Bigr)^k,
\]
\[
bc+1=\Bigl(\frac{X}{Z}\Bigr)^k,\qquad
bd+1=0,\qquad
cd+1=\Bigl(\frac{Y}{W}\Bigr)^k.
\]

\smallskip

In both cases, if
\begin{equation}\label{eq:general-nondeg}
XYZW\,(X^k-Y^k)(X^k-W^k)(Y^k-W^k)\neq 0,
\end{equation}
then the resulting numbers $a,b,c,d$ are pairwise distinct and nonzero, and hence form a $k$th power rational Diophantine quadruple.
\end{theorem}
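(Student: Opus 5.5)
The plan is to verify the six identities by direct substitution and then check the nondegeneracy claim, mirroring the proof of Proposition~\ref{prop:euler-to-quadruple}. I will treat both cases at once by writing $b=-\mu$, $d=\mu^{-1}$ and $a=(X^k-W^k)/(\mu Z^k)$, $c=(Y^k-W^k)/(\mu Z^k)$. Here $\mu=(W/Z)^m$ in the even case, since $Z^mW^m=\mu Z^k$. In the odd case $\mu=\lambda^k$, and the key fact is $\mu^2=\lambda^{2k}=(W/Z)^k$. In the even case the same relation $\mu^2=(W/Z)^{2m}=(W/Z)^k$ holds automatically. So in both cases we have $\mu^2=W^k/Z^k$, and everything should follow from this single relation together with $X^k+Y^k=Z^k+W^k$.

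The identities are then routine. First, $bd+1=0$ is immediate. Next,
\[
ab+1=1-\frac{X^k-W^k}{Z^k}=\frac{Z^k+W^k-X^k}{Z^k}=\frac{Y^k}{Z^k},
\]
using the surface equation, and $bc+1=X^k/Z^k$ in the same way. For $ad+1$ we have $ad=(X^k-W^k)/(\mu^2Z^k)=(X^k-W^k)/W^k$, so $ad+1=(X/W)^k$, and likewise $cd+1=(Y/W)^k$. Finally,
\[
ac+1=\frac{(X^k-W^k)(Y^k-W^k)+W^kZ^k}{W^kZ^k}.
\]
Expanding the numerator gives $X^kY^k-W^k(X^k+Y^k)+W^{2k}+W^kZ^k$. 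Substituting $X^k+Y^k=Z^k+W^k$, this collapses to $X^kY^k$, which gives $(XY/(ZW))^k$.

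For distinctness and nonzeroness, $b$ and $d$ are nonzero since $ZW\ne0$. Also $a=0$ iff $X^k=W^k$, and $c=0$ iff $Y^k=W^k$. The differences are
\[
a-c=\frac{X^k-Y^k}{\mu Z^k},\qquad a-b=\frac{X^k-W^k+\mu^2Z^k}{\mu Z^k}=\frac{X^k}{\mu Z^k},
\]
and similarly $c-b=Y^k/(\mu Z^k)$. Using the relation $a-d=(ab+1-1-bd)/b$-type rewriting, or more directly $a-d=(X^k-W^k-\mu^2Z^k)/(\mu Z^k)=(X^k-2W^k)/(\mu Z^k)$, is not as clean. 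So instead I will compute $a-d=a+1/b$, giving $(ab+1)/b=-Y^k/(\mu Z^k)$; similarly $c-d=(cb+1)/b=-X^k/(\mu Z^k)$. Lastly, $b\ne d$ because $bd=-1$ has no rational solution with $b=d$. All of these are nonzero under \eqref{eq:general-nondeg}.

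No step is a real obstacle. The only point needing care is the unified relation $\mu^2=(W/Z)^k$ in the odd case, which is exactly where the hypothesis $W/Z=\lambda^2$ enters.
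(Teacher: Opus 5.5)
Your proof is correct and follows essentially the paper's route: you verify the six identities by direct substitution using $X^k+Y^k=Z^k+W^k$, and you check nondegeneracy through the same differences $a-b$, $c-b$, $a-d$, $c-d$, $a-c$ together with $bd=-1$, while your unified parameter $\mu$ with $\mu^2=(W/Z)^k$ neatly packages the paper's point that for odd $k$ one needs $\kappa=W/Z$ to be a square. One small slip: your discarded ``direct'' formula for $a-d$ actually computes $a-\mu=a+b$, since $d=\mu^{-1}$ and not $\mu$; the correct direct computation gives $a-d=(X^k-W^k-Z^k)/(\mu Z^k)=-Y^k/(\mu Z^k)$, which agrees with your $(ab+1)/b$ route.
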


\begin{proof}
The proof is formally identical to the proof of Proposition~\ref{prop:euler-to-quadruple}. 
One starts from \eqref{eq:rstuvw-def-general}--\eqref{eq:square-general}, imposes
\[
v=0,\qquad r=\kappa w,\qquad t=\kappa u,\qquad s=\kappa uw,
\]
and uses \eqref{eq:general-special-ferm} together with \eqref{eq:general-square-special}. 
In the even case the square condition is automatic; in the odd case it is automatic once $\kappa=W/Z$ is a square in $\Q$, say $\kappa=\lambda^2$. 
Substituting
\[
r=\frac{Y}{Z},\qquad s=\frac{XY}{ZW},\qquad t=\frac{X}{Z},\qquad u=\frac{X}{W},\qquad v=0,\qquad w=\frac{Y}{W}
\]
into \eqref{eq:abcd-from-rstuvw-general} yields the formulas
\eqref{eq:general-euler-map-even} and \eqref{eq:general-euler-map-odd}. 
The verification of the displayed identities for $ab+1,\dots,cd+1$ and of the nondegeneracy criterion \eqref{eq:general-nondeg} is the same as in the quartic case, with $4$ replaced by $k$.
\end{proof}

\begin{remark}
For $k=3$, Theorem~\ref{thm:general-euler-to-quadruple} recovers the cubic family of Batta--Szikszai--Tengely \cite{BST}. Indeed, for $K\in\Q^\times\setminus\{\pm1\}$ consider the rational point
\[
(X:Y:Z:W)=\bigl(K^2(K^6+2):-(2K^6+1):K^6-1:K^2(K^6-1)\bigr)
\]
on
\[
\mathcal E_3:\qquad X^3+Y^3=Z^3+W^3.
\]
Since
\[
\frac{W}{Z}=K^2
\]
is a square in $\Q$, the odd case of Theorem~\ref{thm:general-euler-to-quadruple} applies with $\lambda=K$ and yields
\[
a=\frac{9K^3(K^{12}+K^6+1)}{(K^6-1)^3},\qquad
b=-K^3,
\]
\[
c=-\frac{K^{24}+5K^{18}+15K^{12}+5K^6+1}{K^3(K^6-1)^3},\qquad
d=\frac1{K^3}.
\]
If we now set $t=K^{-3}$, then this becomes
\[
\left\{
-\frac{9(t^5+t^3+t)}{t^6-3t^4+3t^2-1},
-\frac1t,
\frac{t^8+5t^6+15t^4+5t^2+1}{t^7-3t^5+3t^3-t},
t
\right\},
\]
which, after reordering the elements, is precisely the cubic quadruple family from \cite[Theorem~4.11]{BST}.
\end{remark}

For exponents $k>4$, the arithmetic of the Fermat--Euler surface
\[
\mathcal E_k:\qquad X^k+Y^k=Z^k+W^k
\]
appears to be substantially more rigid than in the quartic case. Indeed, $\mathcal E_k$ is a smooth surface of degree $k$ in $\Pj^3$, hence of general type for $k\geq 5$. In particular, the Bombieri--Lang conjecture predicts that the set of rational points on $\mathcal E_k$ outside the union of its curves of genus $0$ and $1$ should not be Zariski dense; in the form relevant to equal sums of powers, one expects only finitely many rational points away from such low-genus curves. Thus, for $k>4$, the existence of infinite families of $k$th power rational Diophantine quadruples coming from Theorem~\ref{thm:general-euler-to-quadruple} should be viewed as a problem about finding special rational or elliptic curves on $\mathcal E_k$. In the case when $k=6$, Newton and Rouse remark that it seems likely that no positive integer can be written as a sum of two rational sixth powers in more than one way, and they cite Ekl's computer search, which found no nontrivial integer solutions to
\[
a^6+b^6=c^6+d^6
\]
with $a\neq c,d$ and $a^6+b^6<7.25\times 10^{24}$ \cite{Ekl,NewtonRouse}. This provides some computational evidence that nontrivial rational points on $\mathcal E_6$ may be rather scarce.

\section{Acknowledgements}

A.A. and M.K. were supported by the Croatian Science Foundation under the project no.\ IP-2022-10-5008 (TEBAG). M.K. acknowledges support from the project “Implementation of cutting-edge research and its application as part of the Scientific Center of Excellence for Quantum and Complex Systems, and Representations of Lie Algebras”, Grant No.\ PK.1.1.10.0004, co-financed by the European Union through the European Regional Development Fund -- Competitiveness and Cohesion Programme 2021-2027. This research was funded by the European Union NextGenerationEU through the National Recovery and Resilience Plan 2021-2026. Institutional grants of University of Zagreb Faculty of Science (IK IA 1.1.3. Impact4Math) and University of Zagreb Faculty of Electrical Engineering and Computing, Value-aligned and interpretable optimization and reasoning (VALOR). Institutional projects “BELLGI – Bellman functions, graphs and computability” (100‑038/26) and “TopoPha – Topological Phases” (100‑021/26) of the University of Zagreb Faculty of Civil Engineering.
This research was carried out using the advanced computing service provided by the University of Zagreb University Computing Centre - SRCE.


\begin{thebibliography}{99}


\bibitem{Andrasek}
A.~Andra\v{s}ek,
\newblock \emph{On Regular Higher Power Rational Diophantine Triples},
\newblock preprint arxiv:2604.17018.


\bibitem{BST}
G.~Batta, M.~Szikszai and S.~Tengely,
\newblock Higher power rational Diophantine tuples,
\newblock \emph{Ramanujan J.} \textbf{68} (2025), Art.~63.


\bibitem{Bremner}
A.~Bremner,
\newblock On the quartic surface $x^4+y^4=z^4+w^4$,
\newblock \emph{Acta Arith.} \textbf{220} (2025), no.~3, 209--248.

\bibitem{BD}
Y.~Bugeaud and A.~Dujella,
On a problem of Diophantus for higher powers,
\emph{Mathematical Proceedings of the Cambridge Philosophical Society}
\textbf{135} (2003), no.~1, 1--10.

\bibitem{BF}
D.~Byeon and C.~Fuchs,
\newblock \emph{Cube Diophantine triples and elliptic curves},
\newblock preprint arXiv:2510.01896, to appear in \emph{Acta Arithmetica}.



\bibitem{DujellaBook}
A.~Dujella,
\newblock \emph{Diophantine $m$-tuples and elliptic curves},
\newblock Springer, Cham, 2024.

\bibitem{DK}
A.~Dujella and M.~Kazalicki,
\newblock More on Diophantine sextuples,
\newblock in \emph{Number Theory --- Diophantine Problems, Uniform Distribution and Applications},
Festschrift in honour of Robert F.~Tichy's 60th birthday,
Springer, 2017, pp.~227--235.

\bibitem{DKMS}
A.~Dujella, M.~Kazalicki, M.~Miki\'c and M.~Szikszai,
\newblock There are infinitely many rational Diophantine sextuples,
\newblock \emph{Int. Math. Res. Not. IMRN} 2017(2) (2017), 490--508.

\bibitem{DKP-sq}
A.~Dujella, M.~Kazalicki and V.~Petri\v{c}evi\'c,
\newblock There are infinitely many rational Diophantine sextuples with square denominators,
\newblock \emph{J. Number Theory} \textbf{205} (2019), 340--346.

\bibitem{DKP-strong}
A.~Dujella, M.~Kazalicki and V.~Petri\v{c}evi\'c,
\newblock Rational Diophantine sextuples with strong pair,
\newblock \emph{Rev. R. Acad. Cienc. Exactas F\'{\i}s. Nat. Ser. A Mat. RACSAM} \textbf{119} (2025), Article~36.

\bibitem{Ekl}
R.~L.~Ekl,
\newblock \emph{New results in equal sums of like powers},
\newblock \emph{Math. Comp.} \textbf{67} (1998), 1309--1315.

\bibitem{NewtonRouse}
A.~Newton and J.~Rouse,
\newblock \emph{Integers that are sums of two rational sixth powers},
\newblock \emph{Canad. Math. Bull.} \textbf{66} (2023), no.~1, 166--177.


\bibitem{SchuettShiodaVanLuijk}
M.~Sch\"utt, T.~Shioda and R.~van Luijk,
\newblock Lines on Fermat surfaces,
\newblock \emph{J. Number Theory} \textbf{130} (2010), no.~9, 1939--1963.


\bibitem{ShiodaFermat}
T.~Shioda,
\newblock On the Picard number of a Fermat surface,
\newblock \emph{J. Fac. Sci. Univ. Tokyo Sect. IA Math.} \textbf{28} (1982), no.~3, 725--734.

\bibitem{SwinnertonDyerAGNT}
H.~P.~F. Swinnerton-Dyer,
\newblock \emph{Applications of algebraic geometry to number theory},
\newblock in \emph{Number Theory Institute}
(Proc. Sympos. Pure Math., Vol.~XX, State Univ. of New York, Stony Brook, N.Y., 1969),
pp.~1--52, Amer. Math. Soc., Providence, RI, 1971.

\end{thebibliography}
\end{document}